\numberwithin{equation}{section}
\newtheorem{thm}{Theorem}[section]
\newtheorem{prop}[thm]{Proposition}
\newtheorem{lem}[thm]{Lemma}
\theoremstyle{definition}
\theoremstyle{remark}
\newtheorem{rem}[thm]{Remark}
\newcommand{\ZZ}{\mathbb{Z}}
\newcommand{\st}{{\rm st}}
\title[Matching complexes of small grids]{Matching complexes of small grids}
\author{Takahiro Matsushita}
\address{Department of Mathematical Sciences, University of the Ryukyus, Nishihara-cho, Okinawa 903-0213, Japan}
\email{mtst@sci.u-ryukyu.ac.jp}
\keywords{matching complexes, independence complexes, square grids}
\begin{document}

\baselineskip.525cm

\maketitle

\begin{abstract}
The matching complex $M(G)$ of a simple graph $G$ is the simplicial complex consisting of the matchings on $G$. The matching complex $M(G)$ is isomorphic to the independence complex of the line graph $L(G)$.

Braun and Hough introduced a family of graphs $\Delta^m_n$, which is a generalization of the line graph of the $(n \times 2)$-grid graph. In this paper, we show that the independence complex of $\Delta^m_n$ is a wedge of spheres. This gives an answer to a problem suggested by Braun and Hough.
\end{abstract}

\section{Introduction}

A {\it matching} on a simple graph $G = (V(G), E(G))$ is a subgraph of $G$ whose maximal degree is at most 1. A matching is identified with its edge set. The {\it matching complex $M(G)$ of $G$} is the simplicial complex whose simplices are the matchings on $G$. We refer to \cite{Jonsson2} for a concrete introduction to this subject.

In this paper, we study the homotopy types of the matching complexes of the $(n \times 2)$-grid graphs. For a pair $m$ and $n$ of positive integers, the {\it $(m \times n)$-grid graph $\Gamma(m,n)$} is defined by
$$V(\Gamma(m,n)) = \{ (i,j) \in \ZZ^2 \; | \; 1 \le i \le m, \; 1 \le j \le n\},$$
$$E(\Gamma(m,n)) = \{ \{ (i,j), (i', j')\} \; | \; |i' - i| + |j' - j| = 1\}.$$
In particular, we write $\Gamma_n$ instead of $\Gamma(n,2)$.

Kozlov \cite{Kozlov1} showed that the matching complex of $\Gamma(n,1)$ is contractible or homotopy equivalent to a sphere. However, the topology of a matching complex is in general very complicated, even for simple examples of graphs. For example, the matching complexes of complete graphs and complete bipartite graphs have torsions in their integral homology groups (see \cite{BLVZ}, \cite{Jonsson2}, and \cite{SW}). After Jonsson's unpublished work \cite{Jonsson1} concerning the matching complexes of general grid graphs, Braun and Hough \cite{BH} investigate the matching complex of $\Gamma_n$, and wrote ``the topology of the matching complex for the $2 \times n$ grid graph is quite mysterious''. However, in this paper we determine the homotopy type of the matching complex of $\Gamma_n$ completely, and show that they are wedges of spheres. In fact, we determine the homotopy types of independence complexes of some family of graphs $\Delta^m_n$ introduced by Braun and Hough \cite{BH}. To state it precisely, we need some preparation.

For a graph $G$, the {\it independence complex $I(G)$ of $G$} is the simplicial complex whose simplices are the independent sets of $G$. The {\it line graph $L(G)$ of $G$} is the graph whose vertex set is the edge set $E(G)$ of $G$, and two distinct edges $e$ and $e'$ of $G$ are adjacent if and only if they have a common endpoint. Then the matching complex $M(G)$ coincides with the independence complex of the line graph $L(G)$. Figure 1 depicts the line graph of $\Gamma_5$. Here $e_i$, $f_i$, and $f'_i$ denote the edges $\{ (1,i), (2,i)\}$, $\{ (i,1), (i+1,1)\}$, and $\{ (i,2), (i+1,2)\}$ of $\Gamma_n$, respectively.

\begin{figure}[t]
\begin{center}
\begin{picture}(180,90)(0,-5)
\multiput(10,40)(40,0){5}{\circle*{3}}
\multiput(30,70)(40,0){4}{\circle*{3}}
\multiput(30,10)(40,0){4}{\circle*{3}}

\multiput(10,40)(40,0){4}{\line(2,3){20}}
\multiput(10,40)(40,0){4}{\line(2,-3){20}}
\multiput(50,40)(40,0){4}{\line(-2,3){20}}
\multiput(50,40)(40,0){4}{\line(-2,-3){20}}

\multiput(30,10)(0,60){2}{\line(1,0){120}}

\put(-3,38){\tiny $e_1$} \put(175,38){\tiny $e_5$}
\put(155,2){\tiny $f'_{4}$} \put(153,74){\tiny $f_{4}$}
\put(19,2){\tiny $f'_1$} \put(19,74){\tiny $f_1$}

\end{picture}

{\bf Figure 1}
\end{center}
\end{figure}

\begin{figure}[b]
\begin{center}
\begin{picture}(260,100)(0,0)
\multiput(40,10)(0,30){4}{\circle*{3}}
\multiput(100,10)(0,30){4}{\circle*{3}}
\multiput(160,10)(0,30){4}{\circle*{3}}
\multiput(220,10)(0,30){4}{\circle*{3}}

\multiput(40,10)(0,30){4}{\line(1,0){180}}

\multiput(10,55)(60,0){5}{\circle*{3}}

\multiput(10,55)(60,0){4}{\line(2,3){30}}
\multiput(10,55)(60,0){4}{\line(2,1){30}}
\multiput(10,55)(60,0){4}{\line(2,-1){30}}
\multiput(10,55)(60,0){4}{\line(2,-3){30}}

\multiput(70,55)(60,0){4}{\line(-2,3){30}}
\multiput(70,55)(60,0){4}{\line(-2,1){30}}
\multiput(70,55)(60,0){4}{\line(-2,-1){30}}
\multiput(70,55)(60,0){4}{\line(-2,-3){30}}

\put(260,52){$\Delta^4_5$}
\end{picture}

{\bf Figure 2.}
\end{center}
\end{figure}

For a pair $m$ and $n$ of positive integers, Braun and Hough \cite{BH} introduced the graph $\Delta^m_n$, which is a generalization of $L(\Gamma_n)$. The vertex set of $\Delta^m_n$ consists of $e_i$ for $i = 1, \cdots, n$ and $f^k_i$ for $i = 1, \cdots, n-1$ and $k = 1, \cdots, m$. The adjacent relations are given as follows:
$$f^k_i \sim f^k_{i+1}, (i= 1, \cdots, n-2), \; e_i \sim f^k_i \sim e_{i+1}, (i = 1, \cdots, n-1)$$
Figure 2 depicts the graph $\Delta^4_5$. Clearly, $\Delta^2_n$ and $L(\Gamma_n)$ are isomorphic, and hence $I(\Delta^2_n)$ and $M(\Gamma_n)$ are isomorphic.

Braun and Hough \cite{BH} actually studied\footnote{Our definition of $\Delta^m_n$ is a little different from the one of \cite{BH}. Namely, their $\Delta^m_n$ is our $\Delta^m_{n+2}$.} the independence complexes of $\Delta^m_n$. The purpose of this paper is to determine the homotopy types of the independence complexes of $\Delta^m_n$. The following two theorems are the main results in this paper.

\begin{thm} \label{thm 1.0}
$\Delta^1_{2n} \simeq S^{n-1}$ and $\Delta^1_{2n-1} \simeq *$ for $n \ge 1$.
\end{thm}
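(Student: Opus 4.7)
The plan is to prove the statement by induction on $n$, exploiting the standard vertex-deletion cofibration for independence complexes. For any vertex $v$ of a graph $G$, writing $N[v]$ for the closed neighborhood of $v$, one has the pushout
$$I(G) = I(G \setminus v) \cup_{I(G \setminus N[v])} \bigl(v * I(G \setminus N[v])\bigr),$$
in which the right-hand factor is a cone with apex $v$. Consequently, if $I(G \setminus v)$ is contractible, then $I(G) \simeq \Sigma I(G \setminus N[v])$.

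I will apply this to $G = \Delta^1_n$ with $v = f_1 = f^1_1$. The essential observation is that, because $m = 1$, the vertex $e_1$ has $f_1$ as its unique neighbor in $\Delta^1_n$; after deleting $f_1$, the vertex $e_1$ becomes isolated, and hence $I(\Delta^1_n \setminus f_1)$ is a cone with apex $e_1$ and is therefore contractible. On the other hand, for $n \ge 3$ one has $N[f_1] = \{f_1, e_1, e_2, f_2\}$, and the induced subgraph on the complementary vertex set $\{e_3, \ldots, e_n, f_3, \ldots, f_{n-1}\}$ is, after the relabeling $e_i \mapsto e_{i-2}$, $f_j \mapsto f_{j-2}$, literally $\Delta^1_{n-2}$. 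The cofibration above therefore yields
$$I(\Delta^1_n) \simeq \Sigma I(\Delta^1_{n-2}) \qquad (n \ge 3).$$

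For base cases, $\Delta^1_1$ is a single vertex, so $I(\Delta^1_1) \simeq *$; and $\Delta^1_2$ is the path $e_1 - f_1 - e_2$, whose independence complex consists of the single edge $\{e_1, e_2\}$ together with the isolated vertex $f_1$, so $I(\Delta^1_2) \simeq S^0$. Iterating the suspension identity from these bases gives
$$I(\Delta^1_{2n}) \simeq \Sigma^{n-1} S^0 \simeq S^{n-1}, \qquad I(\Delta^1_{2n-1}) \simeq \Sigma^{n-1}(*) \simeq *.$$

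I do not expect any serious obstacle: the entire argument reduces to the elementary observation that $e_1$ becomes isolated after the deletion of $f_1$, which uses in an essential way the hypothesis $m = 1$. For $m \ge 2$ the vertex $e_1$ would remain adjacent to $f^2_1, \ldots, f^m_1$ after removing $f^1_1$, so this clean suspension recursion breaks down — which is presumably why the cases $m \ge 2$ occupy the remainder of the paper.
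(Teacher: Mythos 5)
Your proof is correct, and it follows the same overall strategy as the paper: establish the recursion $I(\Delta^1_n) \simeq \Sigma I(\Delta^1_{n-2})$ for $n \ge 3$ and iterate from the base cases $I(\Delta^1_1) = *$ and $I(\Delta^1_2) \simeq S^0$. The only difference is the one-line justification of the suspension step. The paper works at the other end of the graph and uses Engstr\"om's fold lemma: since $N(e_n)$ is contained in both $N(e_{n-1})$ and $N(f^1_{n-2})$, those two dominated vertices may be deleted without changing the homotopy type, leaving $\Delta^1_{n-2} \sqcup K_2$, and then $I(\Delta^1_{n-2} \sqcup K_2) = I(\Delta^1_{n-2}) * S^0 = \Sigma I(\Delta^1_{n-2})$. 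You instead use the mapping-cone decomposition at $f^1_1$, observing that $e_1$ is isolated in $\Delta^1_n \setminus f^1_1$ (this is exactly where $m=1$ enters), so $I(\Delta^1_n \setminus f^1_1)$ is a cone and $I(\Delta^1_n) \simeq \Sigma I(\Delta^1_n \setminus N[f^1_1]) = \Sigma I(\Delta^1_{n-2})$. Both are immediate consequences of the cone/link decomposition that the paper sets up in Section 2, so the two arguments are of essentially equal weight; yours avoids the join formula for disjoint unions, while the paper's choice of working at $e_n$ with the fold lemma is the variant that survives (via the deletion of $e_{n-1}$) when $m \ge 2$, which is why the paper phrases it that way.
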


\begin{thm} \label{thm 1.1}
For $n \ge 5$ and $m \ge 2$, we have
$$I(\Delta^m_n) \simeq \Sigma^2 I(\Delta^m_{n-3}) \vee \Sigma^m I(\Delta^m_{n-3}) \vee \Sigma^{m+1} (\Delta^m_{n-4}).$$
Here $\Sigma$ denotes the reduced suspension.
\end{thm}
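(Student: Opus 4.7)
The plan is to iteratively apply the link--deletion cofibration for independence complexes: for any vertex $v$ of a graph $G$, the decomposition $I(G) = I(G - v) \cup \overline{\st}(v)$ (star contractible, intersection $\lk(v) = I(G - N[v])$) yields a cofibration $I(G - N[v]) \to I(G - v) \to I(G)$. I will use this three times, at the vertices $e_2$, $e_3$, and $e_1$ successively.

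First, at $v = e_2$ in $\Delta^m_n$: since $N(e_1) = \{f^1_1, \ldots, f^m_1\} \subseteq N(e_2)$, the vertex $e_1$ becomes isolated in $\Delta^m_n - N[e_2]$, so $I(\Delta^m_n - N[e_2])$ is a cone and the cofibration forces $I(\Delta^m_n) \simeq I(\Delta^m_n - e_2)$. Second, at $v = e_3$ in $\Delta^m_n - e_2$: the graph $(\Delta^m_n - e_2) - N[e_3]$ decomposes as a disjoint union of the star $K_{1,m}$ on $\{e_1, f^1_1, \ldots, f^m_1\}$ and a copy of $\Delta^m_{n-3}$ on $\{e_4, \ldots, e_n\} \cup \{f^k_j : j \ge 4\}$, because every edge joining these sets in $\Delta^m_n$ passes through a vertex in $N[e_3] \cup \{e_2\}$. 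Using $I(A \sqcup B) = I(A) \ast I(B)$ and $I(K_{1,m}) \simeq S^0$, one gets $I((\Delta^m_n - e_2) - N[e_3]) \simeq \Sigma I(\Delta^m_{n-3})$. Third, at $v = e_1$ in $\Delta^m_n - \{e_2, e_3\}$: each $f^k_1$ is a leaf attached to $f^k_2$ in the deletion, and each $f^k_2$ is a leaf attached to $f^k_3$ in the link. Iterating the leaf lemma $I(H) \simeq \Sigma I(H - N[w])$ over $k = 1, \ldots, m$ in each case gives $I(\Delta^m_n - \{e_1, e_2, e_3\}) \simeq \Sigma^m I(\Delta^m_{n-3})$ and $I((\Delta^m_n - \{e_2, e_3\}) - N[e_1]) \simeq \Sigma^m I(\Delta^m_{n-4})$.

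Assuming both connecting maps in the latter two cofibrations are null-homotopic, the three cofibrations combine to yield
$$ I(\Delta^m_n) \simeq \Sigma^2 I(\Delta^m_{n-3}) \vee \Sigma^m I(\Delta^m_{n-3}) \vee \Sigma^{m+1} I(\Delta^m_{n-4}), $$
the three wedge summands coming, respectively, from the suspended link of $e_3$ in $\Delta^m_n - e_2$, from $I(\Delta^m_n - \{e_1, e_2, e_3\})$, and from the suspended link of $e_1$ in $\Delta^m_n - \{e_2, e_3\}$.

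The main obstacle is verifying that the two connecting maps, $\Sigma I(\Delta^m_{n-3}) \to I(\Delta^m_n - \{e_2, e_3\})$ and $\Sigma^m I(\Delta^m_{n-4}) \to \Sigma^m I(\Delta^m_{n-3})$, are null-homotopic. The naive sufficient condition---that there exist a vertex $u$ outside the smaller complex with $N(u)$ contained in the removed set---fails in both cases (one checks, e.g., that no vertex of $\{f^k_2, f^k_3\}$ in the first case has the required property). Overcoming this presumably requires exhibiting each link as a subcomplex of a contractible union of stars with contractible pairwise intersections inside the ambient complex, or carrying the inclusion through the sequence of leaf reductions until the map becomes manifestly null. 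This is where I expect the real technical work of the proof to lie.
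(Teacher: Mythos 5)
Your decomposition is, up to the left--right symmetry $i \mapsto n+1-i$ of $\Delta^m_n$, exactly the one in the paper: the paper deletes $e_{n-1}$ by Engstr\"om's lemma (your step at $e_2$), then runs the link--deletion cofibration at $e_{n-2}$ (your $e_3$) to split off $\Sigma^2 I(\Delta^m_{n-3})$, and then at $e_n$ (your $e_1$) to split the remainder as $\Sigma^m I(\Delta^m_{n-3}) \vee \Sigma^{m+1} I(\Delta^m_{n-4})$. All of your identifications of the links and deletions are correct. However, the proposal has a genuine gap, and you have located it yourself: the two null-homotopy claims are assumed, not proved, and they are the entire technical content of the theorem. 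Without them you only get cofibration sequences, not wedge splittings, and the stated homotopy equivalence does not follow.

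The missing idea is the following two-step maneuver, which the paper applies twice. To show that the inclusion of a link $I(G \setminus N[v]) \hookrightarrow I(G \setminus v)$ is null-homotopic, first use Engstr\"om's lemma \emph{inside the link graph} to delete one vertex without changing the homotopy type: e.g.\ in your first case, in $(\Delta^m_n - e_2) - N[e_3]$ the vertices $f^1_1$ and $f^2_1$ have the same neighborhood $\{e_1\}$ (this is where $m \ge 2$ is used), so deleting $f^1_1$ is a homotopy equivalence of independence complexes. After this deletion, the remaining induced subgraph contains no neighbor of $f^1_2$ in the ambient graph $\Delta^m_n - \{e_2, e_3\}$ (its only neighbors there are $f^1_1$, just deleted, and $f^1_3 \in N[e_3]$), so the corresponding subcomplex lies entirely in the contractible star $\st(f^1_2)$ of $I(\Delta^m_n - \{e_2,e_3\})$. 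Hence the composite, and therefore the original inclusion, is null-homotopic. The second null-homotopy (the map out of $\Sigma^m I(\Delta^m_{n-4})$) is handled the same way: retract the link of $e_1$ via Engstr\"om onto a subgraph avoiding all ambient neighbors of $e_4$ (the mirror of the paper's $e_{n-3}$), so that it sits inside $\st(e_4)$. This is essentially the ``carry the inclusion through the reductions until it becomes manifestly null'' option you mention at the end, but it must actually be carried out --- and note that the retraction is needed precisely because, as you observed, the unreduced link is \emph{not} contained in any single star.
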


\begin{rem}
The equation among the Euler characteristics of $I(\Delta^m_n)$ obtained by Theorem \ref{thm 1.1} is known. See Corollary 16 of \cite{BH}.
\end{rem}

In particular, we have
$$M(\Gamma_n) \simeq \Sigma^2 M(\Gamma_{n-3}) \vee \Sigma^2 M(\Gamma_{n-3}) \vee \Sigma^3 M(\Gamma_{n-4}).$$

By Theorem \ref{thm 1.1}, the homotopy type of $I(\Delta^m_n)$ is determined by $I(\Delta^m_1), \cdots, I(\Delta^m_4)$ recursively. In Section 4, we determine the homotopy types of these complexes as follows:

\begin{prop} \label{prop 1.2}
For $m\ge 2$, the complexes $I(\Delta^m_1), \cdots, I(\Delta^m_4)$ are described as follows:
$$I(\Delta^m_1) = *, I(\Delta^m_2) \simeq S^0, I(\Delta^m_3) \simeq S^1 \vee S^{m-1}, I(\Delta^m_4) \simeq S^m$$
\end{prop}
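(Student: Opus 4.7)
My plan treats the cases $n = 1, 2, 3, 4$ separately, using two standard tools: the \emph{fold lemma} (if $u \neq v$ and $N(u) \subseteq N(v)$ in $G$, then $I(G) \simeq I(G - v)$) and the \emph{star decomposition}, which presents $I(G)$ as the pushout of the contractible star $v \ast \lk(v)$ and the deletion $I(G - v)$ along $\lk(v) = I(G - N[v])$. Two useful consequences will recur throughout: contractible $\lk(v)$ forces $I(G) \simeq I(G - v)$, and contractible $I(G - v)$ forces $I(G) \simeq \Sigma\, \lk(v)$.

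The cases $n = 1, 2$ are immediate: $\Delta^m_1$ is a single vertex; $\Delta^m_2 = K_{2,m}$, whose independence complex is the disjoint union of the $1$-simplex on $\{e_1,e_2\}$ and the $(m-1)$-simplex on $\{f^k_1\}_k$, hence $\simeq S^0$.

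For $n = 3$, I would first decompose at $e_2$: deleting $N[e_2]$ leaves only the isolated vertices $e_1, e_3$, so $\lk(e_2)$ is a full $1$-simplex and $I(\Delta^m_3) \simeq I(\Delta^m_3 - e_2)$. Next decompose at $e_1$: its link is $I(K_{1,m}) \simeq S^0$ on $\{e_3\} \cup \{f^k_2\}_k$, and its deletion reduces, via a further star decomposition at $e_3$ whose link is the full simplex on $\{f^k_1\}_k$, to the independence complex of $m$ disjoint edges, namely $(S^0)^{\ast m} \simeq S^{m-1}$. The pushout then gives $I(\Delta^m_3) \simeq S^{m-1}/S^0$; for $m \geq 2$ the sphere $S^{m-1}$ is path-connected, so identifying two of its points adds a loop and the answer is $S^1 \vee S^{m-1}$.

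For $n = 4$, the same opening move applies: after deleting $N[e_2]$ one is left with the isolated vertex $e_1$ together with a $K_{2,m}$ on $\{e_3, e_4\} \sqcup \{f^k_3\}_k$, so $\lk(e_2) = \Delta^0 \ast I(K_{2,m})$ is a cone on $S^0$, hence contractible, and $I(\Delta^m_4) \simeq I(\Delta^m_4 - e_2)$. In this reduced graph $N(e_4) = \{f^k_3\}_k \subseteq \{f^k_2, f^k_3\}_k = N(e_3)$, so a fold removes $e_3$. Decomposing the result at $e_1$: the link simplifies, after a further fold using $N(f^1_2) \subseteq N(e_4)$ to drop $e_4$, to $I$ of $m$ disjoint edges $\simeq S^{m-1}$; the deletion simplifies, via iterated folds $N(f^k_1) \subseteq N(f^k_3)$ dropping each $f^k_3$, to the join of the isolated $e_4$ with $m$ disjoint edges, a cone and hence contractible. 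A contractible deletion yields $I(\Delta^m_4) \simeq \Sigma\, \lk(e_1) \simeq S^m$. The main obstacle is purely bookkeeping---at each stage identifying the right vertex to decompose on (the ``central'' $e_2$ first, then exploiting the left--right symmetry) so that either the link or the deletion becomes contractible or admits further folds---but no deep step is involved once the correct reduction order is chosen.
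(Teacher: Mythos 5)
Your proposal is correct, and it uses essentially the same toolkit as the paper (the fold lemma of Engstr\"om together with the star/mapping-cone decomposition), with only cosmetic differences in which vertices are folded or decomposed first; I checked each neighborhood containment and each link/deletion computation and they all hold. The only real divergence is that for $n=4$ the paper simply invokes its Proposition \ref{prop 3.2} to reduce to $Y_4$ and then suspends one link, whereas you redo the reduction by hand --- both routes land on $\Sigma S^{m-1}=S^m$.
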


Combining Theorem \ref{thm 1.1} and Proposition \ref{prop 1.2}, we have that the independence complex of $\Delta^m_n$ is a wedge of spheres. In particular, the integral homology groups of them have no torsions. This gives an answer to a problem suggested by Braun and Hough (see the end of \cite{BH}).

%\begin{thm} \label{thm 1.3}
%The independence complex $I(\Delta^m_n)$ of $\Delta^m_n$ is a wedge of spheres for every pair $m$ and $n$ of positive integers. In particular, the matching complex $M(\Gamma_n)$ of $\Gamma_n$ is homotopy equivalent to a wedge of spheres.
%\end{thm}

This paper is organized as follows. In Section 2, we review some facts concerning independence complexes. Since Theorem \ref{thm 1.0} is easily deduced from known results, we discuss it in this section. Theorem \ref{thm 1.1} and Proposition \ref{prop 1.2} are proved in Section 3 and Section 4, respectively.

\section{Preliminaries}

We refer to \cite{Jonsson2} and \cite{Kozlov} for fundamental terms and facts concerning simplicial complexes.

For a vertex $v$ of a simple graph $G$, let $N_G(v)$ denote the set of vertices adjacent to $v$. We write $N_G[v]$ to mean $N_G(v) \cup \{ v\}$. For a subset $S$ of $V(G)$, the subgraph of $G$ induced by $V(G) \setminus S$ is denoted by $G \setminus S$. In particular, we write $G \setminus v$ instead of $G \setminus \{ v\}$.

We first recall the following simple observation of independence complexes (see Adamaszek \cite{Adamaszek1}). For a vertex $v$ of $G$, the link of $v$ in $I(G)$ coincides with $I(G \setminus N_G[v])$. Since $I(G) \setminus v = I(G \setminus v)$, we have that $I(G)$ is the mapping cone of the inclusion $I(G \setminus N_G(v)) \hookrightarrow I(G \setminus v)$. Here $I(G) \setminus v$ denotes the subcomplex of $I(G)$ whose simplices are the simplices of $I(G)$ not containing $v$. This observation clearly yields the following proposition:

\begin{figure}[t]
\begin{center}
\begin{picture}(340,60)(0,-10)
\multiput(10,40)(40,0){4}{\circle*{3}}
\multiput(30,10)(40,0){4}{\circle*{3}}
\multiput(10,40)(40,0){4}{\line(2,-3){20}}
\multiput(30,10)(40,0){3}{\line(2,3){20}}
\put(10,40){\line(1,0){120}}
\put(10,40){\line(-1,0){10}} \put(10,40){\line(-2,-3){10}}

\put(148,0){\tiny $e_n$} \put(108,0){\tiny $e_{n-1}$}
\put(127,47){\tiny $f_{n-1}^1$} \put(87,47){\tiny $f_{n-2}^1$}

\put(160,23){$\simeq$}

\multiput(190,40)(40,0){2}{\circle*{3}}
\multiput(210,10)(40,0){2}{\circle*{3}}
\put(270,40){\circle{3}} \put(290,10){\circle{3}}
\put(310,40){\circle*{3}} \put(330,10){\circle*{3}}

\put(310,40){\line(2,-3){20}}

\multiput(190,40)(40,0){2}{\line(2,-3){20}}
\put(210,10){\line(2,3){20}}

\put(190,40){\line(1,0){40}}
\put(190,40){\line(-1,0){10}} \put(190,40){\line(-2,-3){10}}

\put(328,0){\tiny $e_n$} \put(307,47){\tiny $f_{n-1}^1$}
\put(248,0){\tiny $e_{n-2}$}
\end{picture}

{\bf Figure 3}
\end{center}
\end{figure}

\begin{prop}[See \cite{Adamaszek1}] \label{prop 2.3}
Let $v$ be a vertex of a graph $G$. If the inclusion $I(G \setminus N_G[v]) \hookrightarrow I(G \setminus v)$ is null-homotopic, then we have
$$I(G) \simeq I(G \setminus v) \vee \Sigma I(G \setminus N_G[v]).$$
\end{prop}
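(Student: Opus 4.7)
The plan is to identify $I(G)$ with a mapping cone in a precise way, and then apply the standard principle that the mapping cone of a null-homotopic map splits as a wedge of the target with the suspension of the source.

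First, I would make explicit the identification sketched in the paragraph preceding the statement. The star $\st_{I(G)}(v)$ coincides, as a simplicial complex, with $I(G \setminus N_G(v))$, and it is the simplicial cone with apex $v$ over its boundary, the link $\lk_{I(G)}(v) = I(G \setminus N_G[v])$. Because $I(G \setminus v) \cap \st_{I(G)}(v) = \lk_{I(G)}(v)$ and $I(G) = I(G \setminus v) \cup \st_{I(G)}(v)$, this realizes $I(G)$ as the pushout of the diagram $I(G \setminus v) \hookleftarrow I(G \setminus N_G[v]) \hookrightarrow C I(G \setminus N_G[v])$, equivalently, as the mapping cone of the inclusion $i : I(G \setminus N_G[v]) \hookrightarrow I(G \setminus v)$.

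Next, I would invoke the general fact that the homotopy type of a mapping cone $C_f$ depends only on the homotopy class of the attaching map $f : A \to B$; this is a direct consequence of the homotopy extension property for the CW pair $(CA, A)$. Under the hypothesis that $i$ is null-homotopic, I may replace it by the constant map to a basepoint $b_0 \in I(G \setminus v)$. The mapping cone of such a constant map is obtained from $I(G \setminus v) \sqcup C I(G \setminus N_G[v])$ by collapsing $I(G \setminus N_G[v])$ to $b_0$, giving $I(G \setminus v) \vee \bigl( C I(G \setminus N_G[v]) / I(G \setminus N_G[v]) \bigr) = I(G \setminus v) \vee \Sigma I(G \setminus N_G[v])$, as required.

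The only technical point is the bookkeeping around basepoints and the use of the homotopy extension property. Since the spaces involved are all CW complexes and the relevant inclusions are cellular, the HEP is automatic, so no genuine obstacle arises in the proof of the proposition itself. In applications, the substantive work lies instead in verifying, for a chosen $v$, that the inclusion $i$ is indeed null-homotopic, which is the nontrivial input the proposition demands.
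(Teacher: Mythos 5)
Your proposal is correct and is exactly the argument the paper has in mind: the paper states just before the proposition that $I(G)$ is the mapping cone of the inclusion $I(G \setminus N_G[v]) \hookrightarrow I(G \setminus v)$ and declares that this "clearly yields" the splitting, and your write-up simply supplies the standard details (closed star as a cone over the link, homotopy invariance of the mapping cone via the HEP, and the wedge decomposition of the cone of a constant map). No gaps; the identification of the star with $I(G \setminus N_G(v))$ and the link with $I(G \setminus N_G[v])$ is stated accurately.
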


\begin{prop}[Lemma 2.5 of \cite{Engstrom}] \label{prop Engstrom}
Let $v$ and $w$ be a pair of distinct vertices of $G$ with $N_G(v) \subset N_G(w)$. Then the inclusion $I(G \setminus w) \hookrightarrow I(G)$ is a homotopy equivalence.
\end{prop}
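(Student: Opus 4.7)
The plan is to prove the stronger statement that the inclusion $\iota \colon I(G \setminus w) \hookrightarrow I(G)$ admits a simplicial deformation retract obtained by \emph{folding} the vertex $w$ onto $v$. The first thing to record is that $v$ and $w$ are non-adjacent: if $w$ lay in $N_G(v)$, then by the hypothesis $N_G(v) \subset N_G(w)$ we would get $w \in N_G(w)$, contradicting the fact that $G$ is simple.

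Next, I would define a simplicial map $r \colon I(G) \to I(G \setminus w)$ by sending $w \mapsto v$ and fixing every other vertex, and verify that it is well defined. Given an independent set $\sigma$ of $G$, either $w \notin \sigma$, in which case $r(\sigma) = \sigma$, or $\sigma = \{w\} \cup \tau$ with $\tau \cap N_G(w) = \emptyset$; the hypothesis then gives $\tau \cap N_G(v) = \emptyset$, so $r(\sigma) = \{v\} \cup \tau$ is independent in $G \setminus w$. By construction $r \circ \iota = \mathrm{id}_{I(G \setminus w)}$.

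It remains to show that $\iota \circ r$ is homotopic to $\mathrm{id}_{I(G)}$, which I would do by verifying that the two simplicial maps are contiguous, i.e.\ that $(\iota \circ r)(\sigma) \cup \sigma$ is a simplex of $I(G)$ for every $\sigma$. The only nontrivial case is $\sigma = \{w\} \cup \tau$, where this union equals $\{v, w\} \cup \tau$; the non-adjacency of $v$ and $w$ together with the computation above that $\tau$ avoids $N_G(v) \cup N_G(w)$ make this set independent in $G$. This is the single place in which the hypothesis $N_G(v) \subset N_G(w)$ is used essentially, and hence the step I would watch most carefully, though I do not expect a genuine obstacle --- once the fold $r$ is written down, the rest is a bookkeeping exercise.

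An alternative route would be to invoke Proposition \ref{prop 2.3} with the vertex $w$, after observing that $v$ becomes an isolated vertex of $G \setminus N_G[w]$ and is therefore a cone point of $I(G \setminus N_G[w])$, so that link is contractible and the required null-homotopy is automatic. I prefer the folding approach, however, because it exhibits $\iota$ itself (rather than some abstract equivalence extracted from a mapping-cone decomposition) as the homotopy equivalence asserted in the statement.
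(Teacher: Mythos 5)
Your folding argument is correct, and it is a genuinely different route from the one in the paper. The paper's proof is a two-line application of the mapping-cone observation preceding Proposition \ref{prop 2.3}: $I(G)$ is the mapping cone of $I(G \setminus N_G[w]) \hookrightarrow I(G \setminus w)$, and $I(G \setminus N_G[w])$ is contractible because $v$ becomes an isolated vertex there (hence a cone point) --- exactly the ``alternative route'' you sketch at the end, except that the paper does not even need the null-homotopy hypothesis of Proposition \ref{prop 2.3}, since coning off a contractible subcomplex along a cofibration already gives the equivalence. Your primary argument instead constructs the explicit simplicial fold $r$ with $r \circ \iota = \mathrm{id}$ and verifies that $\iota \circ r$ is contiguous to $\mathrm{id}_{I(G)}$; all the verifications are sound, including the preliminary observation that $v \not\sim w$ (which the paper's proof also tacitly needs, to know that $v$ survives into $G \setminus N_G[w]$) and the key check that for $\sigma = \{w\} \cup \tau$ independent, $\tau$ meets neither $N_G(w)$ nor $N_G(v)$, so $\{v,w\} \cup \tau$ is again independent. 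What your approach buys is a stronger and more concrete conclusion --- $I(G \setminus w)$ is exhibited as a simplicial deformation retract of $I(G)$, with the homotopy inverse written down --- at the cost of a little bookkeeping; what the paper's approach buys is brevity and uniformity, since the same mapping-cone observation is the engine for every other homotopy decomposition in the paper.
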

\begin{proof}
By the above observation, it suffices to see that $I(G \setminus N_G[w])$ is contractible. But this is clear since $G \setminus N_G[w]$ has an isolated vertex $v$.
\end{proof}

Here we give the proof of Theorem \ref{thm 1.0} since it easily follows from Proposition \ref{prop Engstrom}.

\begin{prop} \label{prop 2.1}
If $n \ge 3$, then $I(\Delta^1_n) \simeq \Sigma I(\Delta^1_{n-2})$.
\end{prop}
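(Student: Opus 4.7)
The plan is to prove the proposition by two successive applications of the fold lemma (Proposition \ref{prop Engstrom}) that reduce $\Delta^1_n$ to the graph depicted on the right of Figure 3. The guiding observation is that $e_n$ has the unique neighbor $f^1_{n-1}$ in $\Delta^1_n$, so every vertex adjacent to $f^1_{n-1}$ dominates $e_n$ in the sense required by Proposition \ref{prop Engstrom}.

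Concretely, $N(e_n) = \{f^1_{n-1}\} \subset \{f^1_{n-2}, f^1_{n-1}\} = N(e_{n-1})$, so Proposition \ref{prop Engstrom} with $v = e_n$, $w = e_{n-1}$ gives
$$I(\Delta^1_n) \simeq I(\Delta^1_n \setminus e_{n-1}).$$
In the resulting graph $f^1_{n-2}$ remains adjacent to $f^1_{n-1}$, hence still $N(e_n) \subset N(f^1_{n-2})$, and a second application of Proposition \ref{prop Engstrom} removes $f^1_{n-2}$ without changing the homotopy type. Set $G'' = \Delta^1_n \setminus \{e_{n-1}, f^1_{n-2}\}$.

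The point of these two deletions is that $G''$ splits as a disjoint union of the edge $\{e_n, f^1_{n-1}\}$ and the subgraph induced on $\{e_1, \ldots, e_{n-2}, f^1_1, \ldots, f^1_{n-3}\}$, which is exactly $\Delta^1_{n-2}$. Since the independence complex of a disjoint union is the join and the independence complex of a single edge is $S^0$, one obtains
$$I(\Delta^1_n) \simeq I(G'') \cong I(\Delta^1_{n-2}) * S^0 \simeq \Sigma I(\Delta^1_{n-2}),$$
as desired. No step involves a nontrivial null-homotopy argument, so there is no serious obstacle; the only bookkeeping detail is the extreme case $n = 3$, where the surviving component of $G''$ is just the isolated vertex $e_1 \cong \Delta^1_1$ and the formula still reads $I(\Delta^1_3) \simeq \Sigma * \simeq *$, consistent with Theorem \ref{thm 1.0}.
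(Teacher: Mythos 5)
Your proposal is correct and follows essentially the same route as the paper: both remove $e_{n-1}$ and $f^1_{n-2}$ via Proposition \ref{prop Engstrom} using the dominations $N(e_n) \subset N(e_{n-1})$ and $N(e_n) \subset N(f^1_{n-2})$, and then identify the remaining graph as $\Delta^1_{n-2} \sqcup K_2$, whose independence complex is the join $I(\Delta^1_{n-2}) * S^0 = \Sigma I(\Delta^1_{n-2})$. The only difference is that you perform the two deletions sequentially with explicit re-verification, which the paper leaves implicit.
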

\begin{proof}
Since $N_{\Delta^1_n}(e_n) \subset N_{\Delta^1_n}(e_{n-1})$ and $N_{\Delta^1_n}(e_n) \subset N_{\Delta^1_n}(f^1_{n-2})$ (see Figure 3), we have
$$I(\Delta^1_n) \simeq I(\Delta^1_n \setminus \{ e_{n-1}, f^1_{n-2}\}) = I(\Delta^1_{n-2}) * I(K_2) = \Sigma I(\Delta^1_{n-2}).$$
\end{proof}

\noindent
{\it Proof of Theorem \ref{thm 1.0}.}
It is clear that $I(\Delta^1_1) = *$ and $I(\Delta^1_2) = I(P_3) \simeq S^0$. Here $P_3$ denotes the path graph with 3-vertices. Thus Proposition \ref{prop 2.1} implies Theorem \ref{thm 1.0}.\qed

\section{Theorem \ref{thm 1.1}}

The purpose of this section is to prove Theorem \ref{thm 1.1}. Throughout this section, we assume that $m$ is an integer greater than 1. Suppose $n \ge 2$, and put $X_n = \Delta^m_n \setminus e_{n-1}$. Since $N_{\Delta^m_n}(e_n) \subset N_{\Delta^m_n}(e_{n-1})$, Proposition \ref{prop Engstrom} implies the following:

\begin{lem} \label{lem 3.1}
For $n\ge 2$ and $m \ge 2$, we have $I(\Delta^m_n) \simeq I(X_n)$.
\end{lem}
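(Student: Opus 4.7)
The plan is to apply Proposition~\ref{prop Engstrom} (Engström's lemma) directly with $v = e_n$ and $w = e_{n-1}$; the only work is to verify the neighborhood containment $N_{\Delta^m_n}(e_n) \subset N_{\Delta^m_n}(e_{n-1})$, which is already asserted in the text just before the lemma.

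First I would unpack the definition of $\Delta^m_n$ to read off the neighborhoods. From the adjacency relation $e_i \sim f^k_i \sim e_{i+1}$ (and noting that $e_n$ meets no other vertices, since there is no $f^k_n$), we get
\[
N_{\Delta^m_n}(e_n) = \{ f^k_{n-1} \mid k = 1, \ldots, m \}.
\]
On the other hand, $e_{n-1}$ is adjacent to $f^k_{n-2}$ (for $n \geq 3$) and to $f^k_{n-1}$ for each $k$, so
\[
N_{\Delta^m_n}(e_{n-1}) \supseteq \{ f^k_{n-1} \mid k = 1, \ldots, m \} = N_{\Delta^m_n}(e_n).
\]
This inclusion is the hypothesis of Proposition~\ref{prop Engstrom}.

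Having verified the hypothesis, I would invoke Proposition~\ref{prop Engstrom} with $G = \Delta^m_n$, $v = e_n$, and $w = e_{n-1}$ to conclude that the inclusion
\[
I(\Delta^m_n \setminus e_{n-1}) \hookrightarrow I(\Delta^m_n)
\]
is a homotopy equivalence. Since $X_n = \Delta^m_n \setminus e_{n-1}$ by definition, this is exactly the claim $I(\Delta^m_n) \simeq I(X_n)$.

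There is essentially no obstacle; the only thing to double-check is the boundary case $n = 2$, where $e_{n-1} = e_1$ and the vertex $f^k_{n-2}$ does not exist, but the containment $N_{\Delta^m_2}(e_2) = \{ f^k_1 \} \subseteq N_{\Delta^m_2}(e_1) = \{ f^k_1 \}$ still holds (in fact with equality), so Proposition~\ref{prop Engstrom} still applies. Thus the single sentence ``Apply Proposition~\ref{prop Engstrom} with $v = e_n$, $w = e_{n-1}$'' suffices as a proof.
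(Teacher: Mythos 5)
Your proposal is correct and is exactly the paper's argument: the paper proves Lemma \ref{lem 3.1} by noting $N_{\Delta^m_n}(e_n) \subset N_{\Delta^m_n}(e_{n-1})$ and invoking Proposition \ref{prop Engstrom} with $v = e_n$, $w = e_{n-1}$. Your verification of the neighborhood containment (including the $n=2$ boundary case) is a welcome, if routine, elaboration of what the paper leaves implicit.
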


Next we consider the graph $Y_n = X_n \setminus e_{n-2}$ (see Figure 5).

\begin{figure}[b]
\begin{center}
\begin{picture}(360,120)(0,-15)
\multiput(10,10)(0,30){4}{\circle*{3}}
\multiput(70,10)(0,30){4}{\circle*{3}}
\multiput(130,10)(0,30){4}{\circle*{3}}
\multiput(40,55)(60,0){3}{\circle*{3}}

\put(106,53){\tiny $e_{n-1}$}

\multiput(10,10)(0,30){4}{\line(1,0){120}}
\multiput(10,10)(0,30){4}{\line(-1,0){20}}

\multiput(40,55)(60,0){2}{\line(2,3){30}}
\multiput(40,55)(60,0){2}{\line(2,1){30}}
\multiput(40,55)(60,0){2}{\line(2,-1){30}}
\multiput(40,55)(60,0){2}{\line(2,-3){30}}

\multiput(40,55)(60,0){3}{\line(-2,3){30}}
\multiput(40,55)(60,0){3}{\line(-2,1){30}}
\multiput(40,55)(60,0){3}{\line(-2,-1){30}}
\multiput(40,55)(60,0){3}{\line(-2,-3){30}}

\put(65,-10){$\Delta^m_n$}

\put(175,50){$\simeq$}

\multiput(220,10)(0,30){4}{\circle*{3}}
\multiput(280,10)(0,30){4}{\circle*{3}}
\multiput(340,10)(0,30){4}{\circle*{3}}
\put(250,55){\circle*{3}}
\put(310,55){\circle{3}}
\put(370,55){\circle*{3}}

\multiput(220,10)(0,30){4}{\line(1,0){120}}
\multiput(220,10)(0,30){4}{\line(-1,0){20}}

\put(250,55){\line(2,3){30}}
\put(250,55){\line(2,1){30}}
\put(250,55){\line(2,-1){30}}
\put(250,55){\line(2,-3){30}}

\put(250,55){\line(-2,3){30}}
\put(250,55){\line(-2,1){30}}
\put(250,55){\line(-2,-1){30}}
\put(250,55){\line(-2,-3){30}}

\put(370,55){\line(-2,3){30}}
\put(370,55){\line(-2,1){30}}
\put(370,55){\line(-2,-1){30}}
\put(370,55){\line(-2,-3){30}}

\put(275,-10){$X_n$}

\end{picture}

{\bf Figure 4.}
\end{center}
\end{figure}

\begin{figure}[t]
\begin{center}
\begin{picture}(340,90)(0,-10)
\put(30,40){\circle*{2}}
\put(150,40){\circle*{2}}
\put(70,40){\circle{2}}
\put(110,40){\circle{2}}
\multiput(10,10)(40,0){4}{\circle*{2}}
\multiput(10,30)(40,0){4}{\circle*{2}}
\multiput(10,50)(40,0){4}{\circle*{2}}
\multiput(10,70)(40,0){4}{\circle*{2}}

\put(30,40){\line(2,3){20}}
\put(30,40){\line(2,1){20}}
\put(30,40){\line(2,-3){20}}
\put(30,40){\line(2,-1){20}}

\put(30,40){\line(-2,3){20}}
\put(30,40){\line(-2,1){20}}
\put(30,40){\line(-2,-3){20}}
\put(30,40){\line(-2,-1){20}}

\put(150,40){\line(-2,3){20}}
\put(150,40){\line(-2,1){20}}
\put(150,40){\line(-2,-3){20}}
\put(150,40){\line(-2,-1){20}}

\multiput(10,10)(0,20){4}{\line(1,0){120}}
\multiput(10,10)(0,20){4}{\line(-1,0){10}}

\put(155,38){\tiny $e_n$}

\put(210,40){\circle*{2}}
\put(250,40){\circle{2}}
\put(290,40){\circle{2}}
\put(330,40){\circle*{2}}

\put(210,40){\line(-2,3){20}}
\put(210,40){\line(-2,1){20}}
\put(210,40){\line(-2,-3){20}}
\put(210,40){\line(-2,-1){20}}

\put(330,40){\line(-2,3){20}}
\put(330,40){\line(-2,1){20}}
\put(330,40){\line(-2,-1){20}}
\put(330,40){\line(-2,-3){20}}

\multiput(190,10)(0,20){4}{\circle*{2}}
\multiput(230,10)(0,20){4}{\circle{2}}
\multiput(270,10)(0,20){4}{\circle{2}}
\multiput(310,10)(0,20){4}{\circle*{2}}

\put(215,38){\tiny $e_{n-3}$}
\put(335,38){\tiny $e_n$}
\put(308,78){\tiny $f_{n-1}^1$}
\put(268,78){\tiny $f_{n-2}^1$}

\multiput(190,10)(0,20){4}{\line(-1,0){10}}

\put(66,-7){\footnotesize $Y_n$}
\put(224,-7){\footnotesize $X_n \setminus N[e_{n-2}]$}
\end{picture}

{\bf Figure 5}
\end{center}
\end{figure}

\begin{prop} \label{prop 3.2}
For $n \ge 4$ and $m\ge 2$, we have $I(X_n) \simeq I(Y_n) \vee \Sigma^2 I(\Delta^m_{n-3})$.
\end{prop}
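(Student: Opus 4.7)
The plan is to deduce the proposition from Proposition~\ref{prop 2.3} applied to the graph $X_n$ with the vertex $v = e_{n-2}$. Since $X_n \setminus e_{n-2} = Y_n$ by definition, this reduces the proof to two steps: (a) identifying the homotopy type of $I(X_n \setminus N_{X_n}[e_{n-2}])$, and (b) verifying that the inclusion $I(X_n \setminus N_{X_n}[e_{n-2}]) \hookrightarrow I(Y_n)$ is null-homotopic.

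For (a), I would read $N_{X_n}[e_{n-2}]$ directly off the definition of $\Delta^m_n$: since $e_{n-1}$ was removed in forming $X_n$ and distinct $e_i, e_j$ are never adjacent, we get $N_{X_n}[e_{n-2}] = \{e_{n-2}\} \cup \{f^k_{n-3}, f^k_{n-2} : 1 \le k \le m\}$. Deleting these vertices splits $X_n$ into two components (cf.\ the right half of Figure 5): the subgraph induced on $\{e_1,\ldots,e_{n-3}\} \cup \{f^k_i : i \le n-4\}$, which is a copy of $\Delta^m_{n-3}$, and the star $K_{1,m}$ with centre $e_n$ and leaves $f^1_{n-1},\ldots,f^m_{n-1}$. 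Since $I$ sends disjoint unions of graphs to joins of complexes, and since $I(K_{1,m})$ is the disjoint union of the isolated point $\{e_n\}$ with the $(m-1)$-simplex on $\{f^k_{n-1}\}_k$ and is thus homotopy equivalent to $S^0$, one obtains $I(X_n \setminus N_{X_n}[e_{n-2}]) \simeq I(\Delta^m_{n-3}) \ast S^0 = \Sigma I(\Delta^m_{n-3})$.

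For (b), which I expect to be the main technical point, my plan is to factor the inclusion through a manifestly contractible subcomplex of $I(Y_n)$. A direct single-cone-vertex argument fails, since every vertex of $V(Y_n) \setminus V(X_n \setminus N_{X_n}[e_{n-2}]) = \{f^k_{n-3}, f^k_{n-2}\}_{k}$ is adjacent in $Y_n$ to some vertex of $V(X_n \setminus N_{X_n}[e_{n-2}])$. Instead, let $G'$ denote the induced subgraph of $Y_n$ on the vertex set $V(X_n \setminus N_{X_n}[e_{n-2}]) \cup \{f^1_{n-2}\}$, so that $I(X_n \setminus N_{X_n}[e_{n-2}]) \subseteq I(G') \subseteq I(Y_n)$. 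Since $N_{Y_n}(f^1_{n-2}) = \{f^1_{n-3}, f^1_{n-1}\}$ and $f^1_{n-3} \notin V(G')$, the only neighbour of $f^1_{n-2}$ in $G'$ is $f^1_{n-1}$, so
\[ N_{G'}(f^1_{n-2}) = \{f^1_{n-1}\} \subseteq \{f^1_{n-1},\ldots,f^m_{n-1}\} = N_{G'}(e_n), \]
and Proposition~\ref{prop Engstrom} yields $I(G') \simeq I(G' \setminus e_n)$.

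Finally, in $G' \setminus e_n$ the vertex $f^2_{n-1}$ is isolated, since its only neighbour in $Y_n$ was $e_n$ (note $f^2_{n-2} \notin V(G')$); this is where the hypothesis $m \ge 2$ enters. Therefore $I(G' \setminus e_n)$ is a cone with apex $f^2_{n-1}$, hence contractible, so $I(G')$ is contractible. The inclusion $I(X_n \setminus N_{X_n}[e_{n-2}]) \hookrightarrow I(Y_n)$ then factors through the contractible complex $I(G')$ and is null-homotopic, and Proposition~\ref{prop 2.3} gives the desired splitting
\[ I(X_n) \simeq I(Y_n) \vee \Sigma I(X_n \setminus N_{X_n}[e_{n-2}]) \simeq I(Y_n) \vee \Sigma^2 I(\Delta^m_{n-3}). \]
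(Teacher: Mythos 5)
Your proof is correct and follows essentially the same route as the paper: Proposition~\ref{prop 2.3} applied at $e_{n-2}$, with the link identified via Proposition~\ref{prop Engstrom} and the null-homotopy hinging on $f^1_{n-2}$ together with the vertex $f^2_{n-1}$ (which is exactly where the paper also uses $m \ge 2$). The only difference is organizational: the paper shrinks the link by deleting $f^1_{n-1}$ (a homotopy equivalence) and observes the result lies in the star of $f^1_{n-2}$ in $I(Y_n)$, whereas you enlarge the link to a contractible complex $I(G')$ by adjoining $f^1_{n-2}$; both arguments are valid.
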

\begin{proof}
We want to apply Proposition \ref{prop 2.3} to the vertex $e_{n-2}$ of $X_n$. Thus we need to show that $I(X_n \setminus N_{X_n}[e_{n-2}]) \simeq \Sigma I(\Delta^m_{n-3})$ and the inclusion $I(X_n \setminus N_{X_n}[e_{n-2}]) \hookrightarrow I(X_n \setminus e_{n-2}) = I(Y_n)$ is null-homotopic.

By Figure 5 and Proposition \ref{prop Engstrom}, it is clear that $I(X_n \setminus N_{X_n}[e_{n-2}]) \simeq I(\Delta^m_{n-3} \sqcup K_2) = \Sigma I(\Delta^m_{n-3})$. To see that the inclusion $I(X_n \setminus N_{X_n}[e_{n-2}]) \hookrightarrow I(X_n \setminus e_{n-2})$ is null-homotopic, we first see that the inclusion
$$I(X_n \setminus ( N_{X_n}[e_{n-2}] \cup \{ f^1_{n-1}\})) \hookrightarrow I(X_n \setminus e_{n-2}) = I(Y_n)$$ is a homotopy equivalence. Note that every vertex of $X_n \setminus (N_{X_n}[e_{n-2}] \cup \{ f^1_{n-1}\})$ is not adjacent to $f^1_{n-2}$ in $Y_n$. Thus $I(X_n \setminus ( N_{X_n}[e_{n-2}] \cup \{ f^1_{n-1}\}))$ is contained in the star $\st_{I(Y_n)}(f^1_{n-2})$. This means that the composite
$$I \big( X_n \setminus (N_{X_n}[e_{n-2}] \cup \{ f^1_{n-1}\}) \big) \xrightarrow{\simeq} I\big( X_n \setminus N_{X_n}[e_{n-2}] \big) \to I(Y_n)$$
is null-homotopic. It follows from Proposition \ref{prop Engstrom} that the first inclusion is a homotopy equivalence (Here we use the assumption $m \ge 2$). Thus the inclusion $I(X_n \setminus N_{X_n}[e_{n-2}]) \to I(Y_n)$ is null-homotopic, and this completes the proof.
\end{proof}

Finally we study the homotopy type of $I(Y_n)$

\begin{prop} \label{prop 3.3}
For $n \ge 5$ and $m\ge 2$, we have $I(Y_n) \simeq \Sigma^mI(\Delta^m_{n-3}) \vee \Sigma^{m+1} I(\Delta^m_{n-4})$
\end{prop}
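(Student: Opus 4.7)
The plan is to apply Proposition~\ref{prop 2.3} to the vertex $e_n$ of $Y_n$: provided the inclusion $i\colon I(Y_n\setminus N[e_n])\hookrightarrow I(Y_n\setminus e_n)$ is null-homotopic, this yields
$I(Y_n)\simeq I(Y_n\setminus e_n)\vee\Sigma I(Y_n\setminus N[e_n])$.
The two summands are then identified with $\Sigma^m I(\Delta^m_{n-3})$ and $\Sigma^{m+1}I(\Delta^m_{n-4})$ by iterated applications of Proposition~\ref{prop Engstrom}.

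More precisely, in $Y_n\setminus e_n$ each $f^k_{n-1}$ is a pendant on $f^k_{n-2}$ with $\{f^k_{n-2}\}=N(f^k_{n-1})\subseteq N(f^k_{n-3})$, so Proposition~\ref{prop Engstrom} removes $f^k_{n-3}$ for each $k=1,\dots,m$; the result is $\Delta^m_{n-3}$ together with the $m$ isolated edges $\{f^k_{n-2},f^k_{n-1}\}$, so $I(Y_n\setminus e_n)\simeq I(\Delta^m_{n-3})*S^{m-1}=\Sigma^m I(\Delta^m_{n-3})$. Analogously, in $Y_n\setminus N[e_n]$ each $f^k_{n-2}$ is a pendant on $f^k_{n-3}$; applying Proposition~\ref{prop Engstrom} first with $v=f^1_{n-2}$ to delete $e_{n-3}$ (since $\{f^1_{n-3}\}\subseteq N(e_{n-3})$), and then with $v=f^k_{n-2}$ to delete $f^k_{n-4}$ for $k=1,\dots,m$, reduces the graph to $\Delta^m_{n-4}$ together with the $m$ isolated edges $\{f^k_{n-3},f^k_{n-2}\}$, giving $I(Y_n\setminus N[e_n])\simeq\Sigma^m I(\Delta^m_{n-4})$ and hence $\Sigma I(Y_n\setminus N[e_n])\simeq\Sigma^{m+1}I(\Delta^m_{n-4})$, matching the desired wedge summands.

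The crux of the proof is the null-homotopy of $i$. Following the template of Proposition~\ref{prop 3.2}, I plan to exhibit a vertex $v\in Y_n\setminus e_n$ and a set $T\subseteq V(Y_n\setminus N[e_n])$ together with a chain of Proposition~\ref{prop Engstrom} collapses establishing $I(Y_n\setminus N[e_n])\simeq I((Y_n\setminus N[e_n])\setminus T)$ such that no vertex of $(Y_n\setminus N[e_n])\setminus T$ is adjacent to $v$ in $Y_n\setminus e_n$. The composite inclusion then factors through the contractible star $\st_{I(Y_n\setminus e_n)}(v)$, yielding the null-homotopy. The natural candidate is $v=f^1_{n-1}$, whose only neighbor in $Y_n\setminus e_n$ is $f^1_{n-2}$, so one wants $T\ni f^1_{n-2}$.

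The main obstacle is that $f^1_{n-2}$ has the singleton neighborhood $\{f^1_{n-3}\}$ in $Y_n\setminus N[e_n]$ and admits no fold-partner $v'$ with $N(v')\subseteq\{f^1_{n-3}\}$, so Proposition~\ref{prop Engstrom} does not delete it in a single step. My anticipated workaround is first to use $f^1_{n-2}$ itself as the witness in Proposition~\ref{prop Engstrom} to delete $e_{n-3}$ and $f^1_{n-4}$; after these deletions $\{f^1_{n-3},f^1_{n-2}\}$ becomes an isolated edge. Decomposing $I(Y_n\setminus N[e_n])$ over this edge as $I(G\setminus f^1_{n-2})\cup\st_{I(G)}(f^1_{n-2})$, the first piece automatically embeds into $\st_{I(Y_n\setminus e_n)}(f^1_{n-1})$ (since it avoids $f^1_{n-2}$), and the second piece embeds into $\st_{I(Y_n\setminus e_n)}(f^1_{n-2})$; combining the two via a Mayer--Vietoris-style gluing on the intersection $\lk_{I(G)}(f^1_{n-2})$, which lies in $\st(f^1_{n-1})\cap\st(f^1_{n-2})$, will give the desired null-homotopy. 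Organising this Engstrom chain and the compatibility of the two stars on their intersection is the delicate point of the argument, and the hypothesis $m\ge2$ enters to guarantee that enough fold-partners $f^k_{n-2}$ for $k\ne1$ persist throughout the intermediate moves, exactly as $m\ge2$ was used in Proposition~\ref{prop 3.2}.
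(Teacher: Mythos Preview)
Your overall strategy and your identifications of $I(Y_n\setminus e_n)\simeq\Sigma^m I(\Delta^m_{n-3})$ and $I(Y_n\setminus N[e_n])\simeq\Sigma^m I(\Delta^m_{n-4})$ are exactly what the paper does: your fold sequences produce precisely the paper's auxiliary graphs $W_n$ and $Z_n$.

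The gap is in your null-homotopy argument. Covering $I(G)$ by two pieces $A=I(G\setminus f^1_{n-2})$ and $B=\st_{I(G)}(f^1_{n-2})$, each of which lands in a contractible star of $I(Y_n\setminus e_n)$, does \emph{not} by itself force the inclusion to be null-homotopic. The image only factors through $\st(f^1_{n-1})\cup\st(f^1_{n-2})$, which is homotopy equivalent to the suspension of $\st(f^1_{n-1})\cap\st(f^1_{n-2})$ and has no reason to be contractible. (Think of the identity $S^1\to S^1$ written as the union of two arcs: each arc maps into a contractible hemisphere, the overlaps are compatible, yet the map is not null-homotopic.) Your proposed ``Mayer--Vietoris gluing'' therefore reduces the problem to showing that the inclusion $I(G')\hookrightarrow I\bigl((Y_n\setminus e_n)\setminus\{f^1_{n-1},f^1_{n-2},f^1_{n-3}\}\bigr)$ is null-homotopic, which is essentially the same problem one level down; you have not indicated how that recursion terminates.

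The paper sidesteps this entirely by using a \emph{single} star, centered at $e_{n-3}$ rather than at any $f$-vertex. Setting
\[
Z'_n = Y_n\setminus\bigl(\{f^k_{n-4}:k\}\cup\{e_{n-3},e_n\}\bigr),\qquad
Z''_n = Y_n\setminus\bigl(N_{Y_n}[e_{n-3}]\cup\{e_n\}\bigr),
\]
one has Engstr\"om equivalences $I(Z_n)\hookrightarrow I(Z'_n)\hookleftarrow I(Z''_n)$ (in $Z'_n$ the vertices $f^k_{n-3}$ and $f^k_{n-1}$ are twin pendants on $f^k_{n-2}$, so either may be folded away). Since every vertex of $Z''_n$ is non-adjacent to $e_{n-3}$ in $Y_n\setminus e_n$, the complex $I(Z''_n)$ sits inside the single contractible star $\st_{I(Y_n\setminus e_n)}(e_{n-3})$, and the zigzag immediately gives the null-homotopy of $I(Z_n)\hookrightarrow I(Y_n\setminus e_n)$, hence of $I(Y_n\setminus N[e_n])\hookrightarrow I(Y_n\setminus e_n)$. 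Replacing your two-star gluing with this zigzag fixes the argument.
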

\begin{proof}
We want to apply Proposition \ref{prop 2.3} to the vertex $e_n$ of $Y_n$. Namely, we must show the following:
\begin{itemize}
\item[(1)] The inclusion $I(Y_n \setminus N_{Y_n}[e_n]) \hookrightarrow I(Y_n \setminus e_n)$ is null-homotopic.

\item[(2)] The homotopy type of $I(Y_n \setminus N_{Y_n}[e_n])$ is $\Sigma^m I(\Delta^m_{n-4})$.

\item[(3)] The homotopy type of $I(Y_n \setminus e_n)$ is $\Sigma^m I(\Delta^m_{n-3})$.
\end{itemize}

Define the induced subgraphs $Z_n$, $Z_n'$, and $Z_n''$ of $Y_n$ as follows:
$$Z_n = Y_n \setminus \big( \{ f^i_{n-4} \; | \; i = 1, \cdots, m\} \cup \{ e_{n-3} \} \cup N_{Y_n}[e_n] \big),$$
$$Z'_n = Y_n \setminus \big( \{ f^i_{n-4} \; | \; i = 1, \cdots, m\} \cup \{ e_{n-3}, e_n \} \big),$$
$$Z''_n = Y_n \setminus (N_{Y_n}[e_{n-3}] \cup \{ e_n\}) $$
Figure 6 depicts the graphs $Z_n$, $Z'_n$, and $Z''_n$ in the case $m = 4$.

By Proposition \ref{prop Engstrom}, $I(Y_n \setminus N[e_n])$ is homotopy equivalent to $I(Z_n)$. Clearly, we have $I(Z_n) \simeq \Sigma^m I(\Delta^m_{n-4})$, which implies (2). By Proposition \ref{prop Engstrom}, the inclusions $I(Z_n) \hookrightarrow I(Z'_n)$ and $I(Z''_n) \hookrightarrow I(Z'_n)$ are homotopy equivalences. Since $I(Z''_n)$ is contained in the star $\st_{I(Y_n \setminus e_n)}(e_{n-3})$, we have that the inclusion $I(Z''_n) \hookrightarrow I(Y_n \setminus e_n)$ is null-homotopic. It follows from the commutative diagram
$$\xymatrix{
I(Z_n) \ar[r]^\simeq \ar[rd] & I(Z'_n) \ar[d] & I(Z''_n) \ar[l]_{\simeq} \ar[ld] \\
{} & I(Y_n \setminus e_n) & {}
}$$
that the inclusion $I(Z_n) \hookrightarrow I(Y_n \setminus e_n)$ is null-homotopic. By the sequence
$$I(Z_n) \xrightarrow{\simeq} I(Y_n \setminus N_{Y_n}[e_n]) \to I(Y_n \setminus e_n),$$
of inclusions, we have that $I(Y_n \setminus N_{Y_n}[e_n]) \hookrightarrow I(Y_n \setminus e_n)$ is null-homotopic. This completes the proof of (1).

Finally, we prove (3). By Proposition \ref{prop Engstrom}, it is easy to see that $I(Y_n \setminus e_n)$ is homotopy equivalent to $I(W_n)$ (see Figure 6). Here $W_n$ is defined by
$$W_n = Y_n \setminus ( \{ f^k_{n-3} \; | \; k = 1, \cdots, m\}\cup \{ e_n\}). $$
Clearly, $I(W_n)$ is homotopy equivalent to $\Sigma^m I(\Delta^m_{n-3})$. This completes the proof of (3).
\end{proof}

Combining Lemma \ref{lem 3.1}, Proposition \ref{prop 3.2}, and Proposition \ref{prop 3.3}, we have
$$I(\Delta^m_n) \simeq I(X_n) \simeq I(Y_n) \vee \Sigma^2 I(\Delta^m_{n-3})
\simeq \Sigma^m I(\Delta^m_{n-3}) \vee \Sigma^{m+1}(\Delta^m_{n-4}) \vee I(\Delta^m_{n-3}).$$
This completes the proof of Theorem \ref{thm 1.1}.

\begin{figure}[t]
\begin{center}
\begin{picture}(340,80)(0,0)
\multiput(10,10)(0,20){4}{\circle*{2}}
\multiput(10,10)(0,20){4}{\line(-1,0){15}}
\put(30,40){\circle*{2}}
\put(30,40){\line(-2,-3){20}}
\put(30,40){\line(-2,-1){20}}
\put(30,40){\line(-2,1){20}}
\put(30,40){\line(-2,3){20}}

\put(70,40){\circle{2}}

\put(34,39){\tiny $e_{n-4}$}

\put(78,77){\tiny $f^1_{n-3}$}
\put(123,77){\tiny $f^1_{n-1}$}
\put(154,38){\tiny $e_n$}

\multiput(50,10)(0,20){4}{\circle{2}}
\multiput(90,10)(0,20){4}{\circle*{2}}
\multiput(110,10)(0,20){4}{\circle*{2}}
\multiput(90,10)(0,20){4}{\line(1,0){20}}

\multiput(130,10)(0,20){4}{\circle{2}}
\put(150,40){\circle{2}}

%\put(45,52){\tiny $e_{n-4}$}

\put(66,-7){\footnotesize $Z_n$}

\multiput(210,10)(0,20){4}{\circle*{2}}
\multiput(210,10)(0,20){4}{\line(-1,0){15}}
\put(230,40){\circle*{2}}
\put(230,40){\line(-2,-3){20}}
\put(230,40){\line(-2,-1){20}}
\put(230,40){\line(-2,1){20}}
\put(230,40){\line(-2,3){20}}

\put(270,40){\circle{2}}

\multiput(250,10)(0,20){4}{\circle{2}}
\multiput(290,10)(0,20){4}{\circle*{2}}
\multiput(310,10)(0,20){4}{\circle*{2}}
\multiput(290,10)(0,20){4}{\line(1,0){40}}
\multiput(330,10)(0,20){4}{\circle*{2}}
\put(350,40){\circle{2}}

\put(266,-7){\footnotesize $Z'_n$}
\end{picture}

\begin{picture}(340,115)(0,-10)
\multiput(10,10)(0,20){4}{\circle*{2}}
\multiput(10,10)(0,20){4}{\line(-1,0){15}}
\put(30,40){\circle*{2}}
\put(30,40){\line(-2,-3){20}}
\put(30,40){\line(-2,-1){20}}
\put(30,40){\line(-2,1){20}}
\put(30,40){\line(-2,3){20}}

\put(70,40){\circle{2}}

\multiput(50,10)(0,20){4}{\circle{2}}
\multiput(90,10)(0,20){4}{\circle{2}}
\multiput(110,10)(0,20){4}{\circle*{2}}
\multiput(110,10)(0,20){4}{\line(1,0){20}}
\multiput(130,10)(0,20){4}{\circle*{2}}
\put(150,40){\circle{2}}

%\put(45,52){\tiny $e_{n-4}$}

\put(66,-7){\footnotesize $Z''_n$}

\multiput(210,10)(0,20){4}{\circle*{2}}
\multiput(210,10)(0,20){4}{\line(-1,0){15}}
\multiput(210,10)(0,20){4}{\line(1,0){40}}

\put(230,40){\circle*{2}}
\put(230,40){\line(-2,-3){20}}
\put(230,40){\line(-2,-1){20}}
\put(230,40){\line(-2,1){20}}
\put(230,40){\line(-2,3){20}}

\put(230,40){\line(2,-3){20}}
\put(230,40){\line(2,-1){20}}
\put(230,40){\line(2,1){20}}
\put(230,40){\line(2,3){20}}

\put(270,40){\line(-2,-3){20}}
\put(270,40){\line(-2,-1){20}}
\put(270,40){\line(-2,1){20}}
\put(270,40){\line(-2,3){20}}

\put(270,40){\circle*{2}}

\put(274,38){\tiny $e_{n-3}$}

\multiput(250,10)(0,20){4}{\circle*{2}}
\multiput(290,10)(0,20){4}{\circle{2}}
\multiput(310,10)(0,20){4}{\circle*{2}}
\multiput(310,10)(0,20){4}{\line(1,0){20}}
\multiput(330,10)(0,20){4}{\circle*{2}}
\put(350,40){\circle{2}}

\put(266,-7){\footnotesize $W_n$}
\end{picture}

{\bf Figure 6}
\end{center}
\end{figure}

\section{Proposition \ref{prop 1.2}}

In this section, we prove Proposition \ref{prop 1.2}. For the reader's convenience, we rewrite it here:

\begin{prop} \label{prop 4.1}
For $m\ge 2$, the complexes $I(\Delta^m_1), \cdots, I(\Delta^m_4)$ are described as follows:
$$I(\Delta^m_1) = *, I(\Delta^m_2) \simeq S^0, I(\Delta^m_3) \simeq S^1 \vee S^{m-1}, I(\Delta^m_4) \simeq S^m$$
\end{prop}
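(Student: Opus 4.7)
The plan is to dispatch each case with one or two applications of Proposition~\ref{prop Engstrom}, followed by a single application of Proposition~\ref{prop 2.3} for $n = 3$ and $n = 4$. The cases $n = 1, 2$ are immediate: $\Delta^m_1$ is the single vertex $e_1$, so $I(\Delta^m_1) = *$; and $\Delta^m_2$ is the complete bipartite graph $K_{2,m}$, whose independent sets lie in $\{e_1, e_2\}$ or in $\{f^1_1, \dots, f^m_1\}$, so $I(\Delta^m_2)$ is the disjoint union of a $1$-simplex and an $(m-1)$-simplex, hence $\simeq S^0$.

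For $n = 3$, first apply Proposition~\ref{prop Engstrom} using $N(e_3) \subset N(e_2)$ to reduce to $G := \Delta^m_3 \setminus e_2$, which is $m$ paths $e_1, f^k_1, f^k_2, e_3$ glued at their endpoints. Apply Proposition~\ref{prop 2.3} to the vertex $e_1$. The link $G \setminus N[e_1]$ is the star $K_{1,m}$ centered at $e_3$, so $I(G \setminus N[e_1]) \simeq S^0$. In the deletion $G \setminus e_1$ one has $N(f^k_1) = \{f^k_2\} \subset N(e_3)$, so Proposition~\ref{prop Engstrom} deletes $e_3$ and leaves $m$ disjoint copies of $K_2$, giving $I(G \setminus e_1) \simeq (S^0)^{*m} = S^{m-1}$. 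Since $m \ge 2$ the space $S^{m-1}$ is path-connected, any map $S^0 \to S^{m-1}$ is null-homotopic, and Proposition~\ref{prop 2.3} then yields $I(\Delta^m_3) \simeq S^{m-1} \vee \Sigma S^0 \simeq S^1 \vee S^{m-1}$.

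For $n = 4$, two applications of Proposition~\ref{prop Engstrom} (using $N(e_4) \subset N(e_3)$ and $N(e_1) \subset N(e_2)$) reduce $I(\Delta^m_4)$ to $I(G')$, where $G' := \Delta^m_4 \setminus \{e_2, e_3\}$ is $m$ paths $e_1, f^k_1, f^k_2, f^k_3, e_4$ glued at their endpoints. Apply Proposition~\ref{prop 2.3} to $e_1$. For the link, in $G' \setminus N[e_1]$ one has $N(f^k_2) = \{f^k_3\} \subset N(e_4)$, so Proposition~\ref{prop Engstrom} removes $e_4$ and leaves $m$ disjoint edges, giving $I(G' \setminus N[e_1]) \simeq S^{m-1}$. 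For the deletion, in $G' \setminus e_1$ one has $N(f^k_1) = \{f^k_2\} \subset N(f^k_3) = \{f^k_2, e_4\}$ for every $k$, so the vertices $f^1_3, \dots, f^m_3$ may be deleted successively via Proposition~\ref{prop Engstrom}; this leaves $e_4$ as an isolated vertex, so $I(G' \setminus e_1)$ is a cone, hence contractible. Any map into a contractible space is null-homotopic, so Proposition~\ref{prop 2.3} gives $I(\Delta^m_4) \simeq * \vee \Sigma S^{m-1} \simeq S^m$.

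The main thing to watch is the order and validity of the successive applications of Proposition~\ref{prop Engstrom}, since each removal changes the ambient graph. In the $n = 4$ deletion, for instance, one must check that after deleting $f^1_3, \dots, f^{k-1}_3$ the hypothesis $N(f^k_1) \subset N(f^k_3)$ still holds --- which it does, because these vertices lie in separate ``rails'' of the graph and removing $f^{k'}_3$ affects neither $N(f^k_1)$ nor $N(f^k_3)$ for $k' \ne k$. Once this bookkeeping is in place, the complex at the bottom of each reduction is a disjoint union of simplices, a join of $0$-spheres, or a cone, and the identifications are routine.
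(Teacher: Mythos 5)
Your proof is correct and follows essentially the same route as the paper: reduce to the graph $\Delta^m_n\setminus\{e_{n-1},\dots\}$ via Proposition~\ref{prop Engstrom}, then split off the star/link of one end vertex with Proposition~\ref{prop 2.3}, identifying link and deletion as joins of $0$-spheres or cones. The only differences are cosmetic --- you work from $e_1$ rather than $e_n$ (the graphs are symmetric), and in the $n=4$ case you reach $Y_4=\Delta^m_4\setminus\{e_2,e_3\}$ by two direct Engstr\"om deletions instead of citing Proposition~\ref{prop 3.2}.
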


\begin{proof}
Note that $I(\Delta^m_1)$ is a point. It clearly follows from Proposition \ref{prop Engstrom} that $I(\Delta^m_2) \simeq I(K_2) = S^0$.

Consider the case of $n = 3$. By Lemma \ref{lem 3.1}, we have that $I(\Delta^m_3) \simeq I(X_3)$. Braun and Hough determined the homotopy types of the independence complexes of $X_3$ (see Lemma 3.2 of \cite{BH}), but we give an alternative proof of this result for self-containedness. First Proposition \ref{prop Engstrom} implies that $I(X_3 \setminus e_3)$ and $I(X_3 \setminus \{ e_1, e_3\})$ are homotopy equivalent. Since $X_3 \setminus \{ e_1, e_3\}$ is the $m$-copies of $K_2$, we have 
$$I(X_3 \setminus e_3) \simeq I(X_3 \setminus \{ e_1,e_3\}) = S^{m-1}.$$
On the other hand, applying Proposition \ref{prop Engstrom} again, we have that $I(X_3 \setminus N_{X_3}[e_3])$ and $I(K_2) = S^0$ are homotopy equivalent. Since every map from $S^0$ to $S^{m-1}$ is null-homotopic, the inclusion $I(X_3 \setminus N_{X_3}[e_3]) \hookrightarrow I(X_3 \setminus e_3)$ is null-homotopic. Thus Proposition \ref{prop 2.3} implies $I(X_3) = S^1 \vee S^{m-1}$.

Finally we consider the case $n=4$. By Proposition \ref{prop 3.2} and $I(\Delta^m_1) = *$, we have that $I(X_4) \simeq I(Y_4)$. By Proposition \ref{prop Engstrom}, $I(Y_4 \setminus e_4)$ is homotopy equivalent to the independence complex of the disjoint union of one isolated vertex and $m$-copies of $K_2$, and hence contractible. In particular, the inclusion $I(Y_4 \setminus N_{Y_4}[e_4]) \hookrightarrow I(Y_4 \setminus e_4)$ is null-homotopic, and hence Proposition \ref{prop 2.3} implies $I(Y_4) \simeq \Sigma I(Y_4 \setminus N_{Y_n}[e_4])$. Since $Y_4 \setminus N_{Y_n}[e_4] \cong X_3 \setminus e_3$, we have that $I(Y_4 \setminus N[e_4]) = S^{m-1}$ by the previous paragraph. Thus we conclude that
$$I(\Delta^m_4) \simeq I(Y_4) \simeq \Sigma I(Y_4 \setminus N[e_4]) = S^m.$$
This completes the proof.
\end{proof}

Therefore the complexes $I(\Delta^m_1), \cdots, I(\Delta^m_4)$ are wedges of spheres. Thus Theorem \ref{thm 1.1} implies that all of $I(\Delta^m_n)$ are wedges of spheres and their integral homology groups have no torsions. This gives an answer to a question suggested in the end of Braun and Hough \cite{BH}.

\subsection*{Acknowledgements}
The author thanks an anonymous referee for useful comments which improved the manuscript. The author is supported by JSPS KAKENHI 19K14536.

\end{document}